\newcommand{\muHR}{\mu_{\rm HR}}
\newcommand{\FgHR}{\cF^{\rm gen HR}}
\newcommand{\fgHR}{f^{\rm gen HR}}
\title{On pairwise interaction multivariate Pareto models}
\author{Michaël Lalancette\thanks{\texttt{michael.lalancette@tum.de}}}
\affil{Department of Mathematics, Technical University of Munich, Germany}
\date{\today}
\begin{document}

\maketitle

\begin{abstract}
	
	The rich class of multivariate Pareto distributions forms the basis of recently introduced extremal graphical models. However, most existing literature on the topic is focused on the popular parametric family of H\"usler--Reiss distributions. It is shown that the H\"usler--Reiss family is in fact the only continuous multivariate Pareto model that exhibits the structure of a pairwise interaction model, justifying its use in many high-dimensional problems. Along the way, useful insight is obtained concerning a certain class of distributions considered in \cite{LO23}, a result of independent interest.
	
\end{abstract}

\section{Introduction and main result}

Multivariate Pareto distributions play a central role in tail dependence modeling and inference as the only non-degenerate limit laws that can arise from multivariate threshold exceedances. They are defined as the class of possible non-degenerate weak limits of $u^{-1} \bX \given \|\bX\|_\infty > u$, as $u \to \infty$, for random vectors $\bX$ with unit Pareto margins. As such, they are usually considered to perfectly describe the possible tail dependence strucures of multivariate data. Originally introduced in \cite{roo2006}, they form the basis of multivariate peaks-over-threshold inference \citep{RSW18POT,KRSW19}. Other than a constraint on their support and a certain marginal standardization arising from their definition, multivariate Pareto distributions essentially comprise all homogeneous distributions. In the absolutely continuous case, which is the focus of this note, they can be exactly defined as follows \citep[cf.][]{EV22}.

\begin{defin}
\label{defin:MP}
	An absolutely continuous random vector $\bY := (Y_1, \dots, Y_d)$ with density $f$ has a \emph{multivariate Pareto} distribution if:
	\begin{itemize}
			\item[(MP1)] it is supported on $\cL := [0, \infty)^d \setminus [0, 1]^d$, i.e., $f(\by) = 0$ for $\by \notin \cL$,
			\item[(MP2)] its density $f$ is $(d+1)$-homogeneous, i.e., $f(t \by) = t^{-(d+1)} f(\by)$ for $\by \in \cL$ and $t \geq 1$, and
			\item[(MP3)] the probabilities $P(Y_k > 1)$, $k \in {1, \dots, d}$, are equal to each other.
	\end{itemize}
\end{defin}

The class of multivariate Pareto distributions is of course very rich. It is equivalent to the class of all extreme value copulas, or to all multivariate max-stale distributions (with fixed margins). As such, parametric subfamilies of multivariate Pareto distributions are derived from the corresponding multivariate max-stable models. The most popular such model is that associated to the family of H\"usler--Reiss distributions \cite{HR1989}, hereafter termed H\"usler--Reiss distributions themselves for convenience (rather than H\"usler--Reiss multivariate Pareto).

In the example below and in the rest of this note, let $\bone$ denote a vector each element of which is $1$, and the dimension of which will be clear from the context. For a vector $\by := (y_1, \dots, y_d)$, we define $\log\by$ as the elementwise (natural base) logarithm $(\log y_1, \dots, \log y_d)$. The space of symmetric $d \times d$ matrices is denotes by $\cS^{d \times d}$, and $\cS_{\bone}^{d \times d} \subset \cS^{d \times d}$ represents those matrices which have $\bone$ in their kernel (i.e., that have zero row and column sums). Finally, $\cS_{\bone, +}^{d \times d} \subset \cS_{\bone}^{d \times d}$ represents the matrices which, in addition, are positive semi-definite with rank $d-1$.

\begin{ex}[H\"usler--Reiss distribution]
\label{ex:HR}
	The multivariate Pareto distributed random vector $\bY$ has a \emph{H\"usler--Reiss} distribution if for a matrix $\Theta \in \cS_{\bone, +}^{d \times d}$, its density $f$ is given by
	\[
		f(\by) \propto \exp\Big\{ -\muHR(\Theta)^\top (\log\by) - (\log\by)^\top \Theta (\log\by) \Big\}, \quad \by \in \cL,
	\]
	where $\muHR(\Theta) := (1 + \tfrac{2}{d})\bone - \tfrac{1}{d}\Theta\Gamma\bone$, $\Gamma := \bone \diag(\Theta^+)^\top + \diag(\Theta^+) \bone^\top - 2\Theta^+$, and $\Theta^+$ denotes the Moore--Penrose pseudoinverse of $\Theta$. While traditionally parametrized by the \emph{variogram matrix} $\Gamma$, it has recently been suggested that the H\"usler--Reiss family can be elegantly parametrized by the \emph{H\"usler--Reiss precision matrix} $\Theta$ \citep{HES22}. By Proposition 3.4 of that paper, these two matrices are uniquely determined by each other through a homeomorphic mapping between $\cS_{\bone, +}^{d \times d}$ and the space of symmetric, strictly conditionally negative definite matrices, to which $\Gamma$ belongs. We follow \cite{HES22} and shall refer to the H\"usler--Reiss distribution with precision matrix $\Theta$.
\end{ex}

H\"usler--Reiss distributions enjoy a nice connection to recently introduced extremal graphical models \citep{EH2020}. The authors of that paper declare that two components $Y_i$ and $Y_j$ of a multivariate Pareto random vector are conditionally independent given the other variables $(Y_k)_{k \notin \{i, j\}}$ in the extremal sense if, roughly speaking, the density of $\bY$ admits the factorization
\begin{equation} \label{eq:factor}
	f(\by) = f_i(\by) f_j(\by),
\end{equation}
where $f_i$ (respectively $f_j$) does not depend on its $i$th (respectively $j$th) argument; see their Proposition 1. While this would be equivalent to the usual notion of conditional independence were $\bY$ supported on a product space, this is not the case for multivariate Pareto distributions, which are supported on $\cL$. They then define an extremal graphical model as a multivariate Pareto distribution satisfying a Markov property (with respect to this weaker notion of conditional independence) on a given undirected graph. For the relevant notions of graphical modeling, the reader is referred to \cite{EH2020} or to \cite{MDLW18}.

It is straightforward that if $\bY$ is H\"usler--Reiss distributed with precision matrix $\Theta$, then $Y_i$ and $Y_j$ are conditionally independent given the other variables (in the extremal sense of \cite{EH2020}) if and only if $\Theta_{ij} = 0$. This forms an example of a \emph{pairwise interaction model}: an exponential family of multivariate distributions where the $(i,j)$th element of a parameter matrix fully governs the dependence between the $i$th and $j$th variables.

\begin{defin}
\label{defin:PI}
	Let $q \in \N$. A (curved) exponential family of probability distributions supported on a common set $\cY \subseteq \R^d$ and indexed by a parameter space $\Omega \subseteq (\R^q)^d \times \cS^{d \times d}$ will be called a \emph{pairwise interaction model} if:
	\begin{itemize}
		\item[(PI1)] it corresponds to a family of Lebesgue densities
		\begin{equation} \label{eq:pimodel}
			\cF := \bigg\{ \by \mapsto f_{\mu, \Theta}(\by) := \frac{1}{Z(\mu, \Theta)} \exp\Big\{ -\sum_{i=1}^d \mu_i^\top S_i(y_i) - \sum_{i=1}^d \sum_{j=1}^d \Theta_{ij} T_i(y_i) T_j(y_j) \Big\}, \quad (\mu, \Theta) \in \Omega \bigg\}
		\end{equation}
		for some measurable functions $S_i: \R \to \R^q$ and measurable, non-constant functions $T_i: \R \to \R$, and
		\item[(PI2)] for every pair $(i, j)$, $i \neq j$, there exists at least one parameter $(\mu, \Theta) \in \Omega$ such that $\Theta_{ij} \neq 0$.
	\end{itemize}
\end{defin}


\begin{rem}
	\hfill
	\begin{enumerate}
		
		\item Only Lebesgue densities are considered for the purpose of the present note, but \cref{defin:PI} can be readily adapted to pairwise interaction models dominated by any base (product) measure.
		
		\item Following standard definitions of exponential families, a so-called carrying density $\prod_{i=1}^d h_i(y_i)$ could be included as a factor in the functions $f_{\mu, \Theta}$. However, since we do not require the family to be regular (in particular, the H\"usler--Reiss distributions form a curved exponential family), the carrying density can be absorbed in the marginal terms $S_i(y_i)$ by possibly adding a dimension to each $\mu_i$ and reducing the domain $\cY$. The reader is referred to standard texts such as \cite{Brown86} for notions of exponential families.
		
		\item The property (PI2) is a technical requirement for the characterization of all pairwise interaction multivariate Pareto models in \cref{lemm:F} below. It is very minor: it holds if at least one possible value of the parameter $\Theta$ has no zeros, i.e., if the model allows for the simultaneous presence of all pairwise interactions. Upon inspection of the proof of \cref{lemm:F}, it could even be further relaxed. For instance, consider the graph $G$ which has an edge between $i$ and $j$ if and only if $\Theta_{ij} \neq 0$ for some $(\mu, \Theta) \in \Omega$. The property (PI2) states that $G$ is connected, but \cref{lemm:F} holds under the mere requirement that every edge in $G$ is part of a cycle of odd length (e.g., a triangle).
		
	\end{enumerate}
\end{rem}

Pairwise interaction models are ubiquitous in dependence modeling. When the common support $\cY$ is a product space, they are particularly elegant examples of undirected graphical models, where the conditional independence graph contains the edge $(i, j)$ if and only if $\Theta_{ij} \neq 0$. Each variable then typically satisfies a generalized linear model conditionally on the other variables, with the regression coefficients being extracted from the parameter matrix. Structure learning and parameter inference on a given graph structure can be efficiently carried out via (possibly penalized) likelihood, but also regression \citep{YRAL15} or score matching \citep{LDS16} based methods. Gaussian and Gaussian copula models \citep{LLW09} as well as the continuous square root graphical model \citep{IRD16} are examples of pairwise interaction graphical models used for high-dimensional dependence modeling in Euclidean settings. \cite{KOBMK20} introduce pairwise interaction graphical models for multivariate angular data. Discrete analogues include the Ising model and more general log-linear interaction models \citep{DLS80} as well as the discrete square root graphical model \citep{IRD16}.


Even in a pairwise interaction model without a product space support (such as multivariate Pareto distributions), the $(i,j)$th element of the parameter matrix $\Theta$ is zero if and only if the density admits a factorization as in \cref{eq:factor}. \cite{YDS22} show that score matching can be adapted to perform model selection and inference for the interaction parameter $\Theta$ in such a setting.

The H\"usler--Reiss family has been the focus of many recent papers on high-dimensional modeling and inference for tail dependence, especially in relation to extremal graphical models \citep[see, e.g.,][]{asenova2021inference,roe2021,ELV21eglearn,HES22,LO23,RCG23}. In fact, it is the only pairwise interaction family of multivariate Pareto distributions that can be found in the current literature, despite the fact that such families are naturally related to the density factorization property which underlies extremal graphical models. A natural question is whether there exists another such family. The main result of this note answers that question.

\begin{thm}
\label{thm}
	Let $\cP$ be a family of absolutely continuous multivariate Pareto distributions in dimension $d \geq 3$ that forms a pairwise interaction model. Then $\cP$ is a subset of the H\"usler--Reiss family.
\end{thm}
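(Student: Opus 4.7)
The plan is to view the $(d+1)$-homogeneity condition (MP2) as a functional equation that tightly constrains the pairwise interaction form. Change to logarithmic coordinates by setting $u_i := \log y_i$ and $g(\mathbf u) := \log f(e^{u_1}, \dots, e^{u_d})$; then the support $\cL$ becomes $\{\mathbf u \in \R^d : \max_i u_i > 0\}$ and (MP2) reads as the first-order functional equation $g(\mathbf u + s\bone) - g(\mathbf u) = -(d+1)s$. In these coordinates,
\[
g(\mathbf u) = -\sum_{i=1}^d \phi_i(u_i) - \sum_{i,j} \Theta_{ij}\,\tilde T_i(u_i)\tilde T_j(u_j) - \log Z,
\]
with $\phi_i(u) := \mu_i^\top S_i(e^u)$, $\tilde T_i := T_i \circ \exp$, and $\Theta$ taken symmetric without loss of generality.

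The central algebraic step is to exploit mixed derivatives of this identity. Differentiating in $s$ gives $\sum_k \partial_k g(\mathbf u) = -(d+1)$, and applying $\partial_l \partial_m$ for distinct $l,m$ annihilates every univariate $\phi_i$ term as well as every interaction term not involving both indices, leaving
\[
\Theta_{lm}\bigl[\tilde T_l''(u_l)\tilde T_m'(u_m) + \tilde T_l'(u_l)\tilde T_m''(u_m)\bigr] = 0.
\]
By (PI2) some $(\mu,\Theta) \in \Omega$ has $\Theta_{lm} \neq 0$, so the bracket vanishes identically. Separating variables yields $\tilde T_i''/\tilde T_i' = h_i$, a constant, with $h_l + h_m = 0$ for every distinct pair. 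Here $d \geq 3$ is decisive: for any three distinct indices $l,m,n$, the relations $h_l+h_m = h_l+h_n = h_m+h_n = 0$ force $h_l = 0$. Hence each $\tilde T_i$ is affine, i.e.\ $T_i(y) = a_i \log y + b_i$, with $a_i \neq 0$ by non-constancy.

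Substituting this affine form back into $\sum_k \partial_k g = -(d+1)$ leaves an equation in which a sum of univariate functions $\phi_k'(u_k)$ equals an affine function of $\mathbf u$; the usual separation argument then forces each $\phi_k$ to be a quadratic polynomial in $u_k$. Consequently every density in $\cP$ has the shape
\[
f(\by) \propto \exp\bigl\{-\vec\beta^\top \log\by - (\log\by)^\top \tilde\Theta\, \log\by\bigr\}, \quad \by \in \cL,
\]
for some $\vec\beta \in \R^d$ and symmetric $\tilde\Theta$ (after absorbing the $b_i$'s and constants into $\vec\beta$ and the normalization). This is already a H\"usler--Reiss shape, so only the identification of parameters remains. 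Plugging the quadratic expression into the homogeneity equation forces $\tilde\Theta\bone = 0$, i.e.\ $\tilde\Theta \in \cS_\bone^{d\times d}$, together with a scalar identity on $\vec\beta^\top\bone$. Integrability of $f$ over $\cL$, cleanest in log coordinates after splitting off the $\bone$-direction, requires $\tilde\Theta$ to be positive definite on $\bone^\perp$, hence $\tilde\Theta \in \cS_{\bone,+}^{d\times d}$. Finally (MP3) adds $d-1$ scalar constraints which, jointly with the scalar identity on $\vec\beta^\top\bone$, uniquely determine $\vec\beta$ as a function of $\tilde\Theta$; comparison with the defining H\"usler--Reiss density shows that this unique solution must be $\vec\beta = \muHR(\tilde\Theta)$, completing the proof.

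The main obstacle is regularity: (PI1) only requires $S_i, T_i$ to be measurable, whereas the argument above freely uses second derivatives of $\tilde T_i$. Several remedies are available---convolving the functional equation in the $\bone$-direction before differentiating (the equation is invariant under such convolution), rephrasing the crucial algebraic step via finite differences, or exploiting the exponential family structure to argue a posteriori smoothness. A minor additional subtlety is handling the set where $\tilde T_i'$ vanishes in the separation step, which is managed by continuity and non-constancy of $T_i$. With regularity in place, the remaining content is essentially direct substitution into MP1--MP3.
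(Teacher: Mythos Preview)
Your overall strategy mirrors the paper's exactly: (a) use (MP2) together with (PI2) and $d\ge 3$ to force the interaction functions $T_i$ to be logarithmic; (b) then force the marginal terms (your $\phi_i$) to be quadratic in $\log y_i$, landing in the generalized H\"usler--Reiss class $\FgHR$; (c) read off $\tilde\Theta\bone=0$ and $\vec\beta^\top\bone=d+1$ from (MP2), $\tilde\Theta\in\cS^{d\times d}_{\bone,+}$ from integrability, and finally pin down $\vec\beta=\muHR(\tilde\Theta)$ from (MP3) by uniqueness. Steps (b)--(c) are essentially the paper's Lemmas~\ref{lemm:F} and~\ref{lemm:FgHR}, and your three-index argument $h_l+h_m=h_l+h_n=h_m+h_n=0\Rightarrow h_i=0$ is the differential analogue of the paper's $\delta_i(t)\delta_j(t)=1$ for all pairs $\Rightarrow \delta_i\equiv 1$.

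The substantive difference is in the execution of (a). Where you take mixed partials of $\sum_k\partial_k g=-(d+1)$ and solve the resulting ODE, the paper works entirely with finite differences: from (MP2) it extracts a separation identity for $T_i(ty_i)T_j(ty_j)-T_i(y_i)T_j(y_j)$, proves a scaling lemma $T_i(tx_1)-T_i(tx_2)=\delta_i(t)\bigl(T_i(x_1)-T_i(x_2)\bigr)$, uses the three-index trick, and then invokes Cauchy's functional equation (whose only measurable solutions are linear) to conclude $T_i(y)=c_i\log y+T_i(1)$. This buys precisely what your proposal lacks: validity under the mere measurability of $S_i,T_i$ assumed in (PI1). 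Your acknowledged regularity gap is real, and of your three suggested remedies, the finite-difference one \emph{is} the paper's route. The ``convolve in the $\bone$-direction'' suggestion does not help as stated: the homogeneity identity is indeed preserved, but mollifying only along $\bone$ gives no smoothness transverse to $\bone$, hence no licence to apply $\partial_l\partial_m$ for $l\neq m$; a product mollifier in all coordinates would preserve both the identity and the pairwise form, but then you still owe local integrability of the $\tilde T_i$. And generic exponential-family regularity pertains to smoothness in the natural parameter, not in the sufficient statistic, so it does not supply smoothness of $T_i$ either.
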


\Cref{thm} justifies the focus on the H\"usler--Reiss family in the recent literature on graphical extremes. It would be tempting to extend some of the existing models in the aforementioned papers into more complicated structures while retaining the practicality of estimating pairwise interaction models, but this is in fact not possible.

In particular, in order to develop an efficient score matching algorithm, \cite{LO23} introduce the class of functions
\begin{equation} \label{eq:generalHR}
	\FgHR := \bigg\{ \by \mapsto \fgHR_{\mu, \Theta}(\by) := \frac{1}{Z(\mu, \Theta)} \exp\Big\{ -\mu^\top (\log\by) - (\log\by)^\top \Theta (\log\by) \Big\}: \mu \in \R^d, \Theta \in \cS_{\bone}^{d \times d} \bigg\}
\end{equation}
as surrogates of the densities of H\"usler--Reiss distributions. As the authors rightfully point out, $\FgHR$ strictly generalizes the class of H\"usler--Reiss densities; some of the functions therein are not even integrable, hence cannot be normalized to densities. While there are functions in $\FgHR$ which are in between, i.e., integrable but not H\"usler--Reiss densities, \cref{thm} guarantees that none of them corresponds to a multivariate Pareto distribution. In fact, a corollary of the proof of \cref{thm} is a full characterization of the functions $\fgHR_{\mu, \Theta} \in \FgHR$ based on the values of $\mu$ and $\Theta$; see \cref{lemm:FgHR} below.

\Cref{thm} is a direct consequence of the following two results, which are presented here because they are of independent interest. Their proofs are the object of \cref{sec:proofs}.

\begin{lemm}
\label{lemm:F}
	Let $\cP$ be a family of absolutely continuous multivariate Pareto distributions in dimension $d \geq 3$ that forms a pairwise interaction model and let $\cF$ be the corresponding class of densities. Then $\cF \subseteq \FgHR$.
\end{lemm}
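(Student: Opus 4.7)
The plan is to treat the $(d+1)$-homogeneity property (MP2) as a functional equation on the sufficient statistics $S_i, T_i$ of the pairwise interaction form. Setting $u_i := \log y_i$, $s := \log t$, $\tilde S_i(u) := S_i(e^u)$, and $\tilde T_i(u) := T_i(e^u)$, the identity $f_{\mu, \Theta}(t\by) = t^{-(d+1)} f_{\mu, \Theta}(\by)$ rewrites, for every $(\mu, \Theta) \in \Omega$, $s \geq 0$, and admissible $(u_1, \dots, u_d)$, as the master equation
\[
\sum_{i} \mu_i^\top [\tilde S_i(u_i + s) - \tilde S_i(u_i)] + \sum_{i, j} \Theta_{ij} [\tilde T_i(u_i + s) \tilde T_j(u_j + s) - \tilde T_i(u_i) \tilde T_j(u_j)] = (d+1)\, s.
\]

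The first and main step is to show that each $\tilde T_i$ is affine. Applying the mixed partial $\partial^2/\partial u_k\, \partial u_l$ with $k \neq l$ kills all single-coordinate terms and the right-hand side, leaving (by symmetry of $\Theta$) the constraint $\Theta_{kl}\,[\tilde T_k'(u_k + s)\tilde T_l'(u_l + s) - \tilde T_k'(u_k) \tilde T_l'(u_l)] = 0$. By (PI2) some parameter has $\Theta_{kl} \neq 0$, so the product $\tilde T_k'(u)\tilde T_l'(v)$ is invariant under the diagonal translation $(u, v) \mapsto (u + s, v + s)$ and hence depends only on $v - u$. A standard Cauchy-type argument forces $\tilde T_i'(u) = c_i e^{\lambda_i u}$ with $\lambda_k + \lambda_l = 0$. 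Applying this consequence to three distinct indices $k, l, m$ (which is where $d \geq 3$ enters) yields $\lambda_k + \lambda_l = \lambda_k + \lambda_m = \lambda_l + \lambda_m = 0$, so every $\lambda_i = 0$ and $T_i(y) = \alpha_i \log y + \beta_i$, with $\alpha_i \neq 0$ because (PI1) forbids constant $T_i$.

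The second step is then bookkeeping. Substituting $T_i(y) = \alpha_i \log y + \beta_i$ into the pairwise interaction term yields
\[
\sum_{i, j} \Theta_{ij} T_i(y_i) T_j(y_j) = (\log \by)^\top D_\alpha \Theta D_\alpha\, (\log \by) + (\text{linear in } \log \by) + \text{const},
\]
with $D_\alpha := \diag(\alpha_1, \dots, \alpha_d)$. Re-inserting this into the master equation then forces each $\mu_i^\top \tilde S_i(u)$ to be a quadratic polynomial in $u$ with leading coefficient $-\alpha_i (\Theta \alpha)_i$. Collecting all quadratic and linear contributions in $\log \by$ produces
\[
f_{\mu, \Theta}(\by) = Z^{-1} \exp\bigl\{ -(\log \by)^\top \tilde\Theta\, (\log \by) - \tilde\mu^\top \log\by \bigr\},
\]
where $\tilde\Theta_{ij} = \alpha_i \alpha_j \Theta_{ij}$ for $i \neq j$ and $\tilde\Theta_{ii} = -\sum_{j \neq i} \alpha_i \alpha_j \Theta_{ij}$. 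A one-line check gives $\tilde\Theta \bone = 0$, hence $\tilde\Theta \in \cS_{\bone}^{d \times d}$, placing $f_{\mu, \Theta}$ in $\FgHR$.

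The main obstacle is Step 1. The derivative-based argument should be replaced by (or carefully justified via) a finite-difference version, applying a double-difference operator in $u_k$ and $u_l$ rather than a mixed partial, in order to accommodate the mere measurability of $T_i$ imposed by (PI1); the resulting Cauchy functional equation still admits only the exponential solutions used above. The hypothesis $d \geq 3$ is indispensable: in dimension $d = 2$ only the single constraint $\lambda_1 + \lambda_2 = 0$ would be available, which permits power-law solutions $T_i(y) = c_i y^{\lambda_i}$ and, in principle, pairwise interaction multivariate Pareto families outside $\FgHR$.
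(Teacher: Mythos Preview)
Your proposal is correct and follows the same overall strategy as the paper: exploit (MP2) as a functional equation, isolate the cross term $T_iT_j$ by a second-order difference in the $i$th and $j$th coordinates, reduce to a Cauchy-type equation, and use a third index (via (PI2) and $d\ge 3$) to kill the only free parameter, forcing $T_i$ to be logarithmic. The executions differ slightly. The paper avoids derivatives from the outset: an auxiliary lemma shows that first differences obey $T_i(ty_1)-T_i(ty_2)=\delta_i(t)\bigl(T_i(y_1)-T_i(y_2)\bigr)$ for some non-vanishing $\delta_i$, a four-point second-difference argument then yields $\delta_i(t)\delta_j(t)=1$, and the triple-index trick gives $\delta_i\equiv 1$, whence Cauchy's equation makes $T_i$ logarithmic. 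Your route via the multiplicative Cauchy equation for $\tilde T_i'$ (or its finite-difference surrogate, which you rightly flag as the needed fix) is a bit more direct but requires a little extra care with possible zeros of the difference functions; once carried out, it recovers exactly the paper's scaling with $\delta_i(t)=e^{\lambda_i\log t}$. For the $S_i$ step, the paper first absorbs the diagonal of $\Theta$ into the univariate terms so that $\Theta\bone=0$ and then shows each $\mu_i^\top S_i$ is logarithmic, whereas you keep the diagonal, show $\mu_i^\top\tilde S_i$ is quadratic in $u_i$, and let its leading coefficient cancel against the diagonal contribution to produce $\tilde\Theta\in\cS_{\bone}^{d\times d}$; these are equivalent bookkeeping choices.
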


\begin{lemm}
\label{lemm:FgHR}
	The functions $\fgHR_{\mu, \Theta}$ in $\FgHR$ can be categorized as follows.
	\begin{itemize}
		\item[(i)] If $\Theta \in \cS_{\bone, +}^{d \times d}$ and $\mu = \muHR(\Theta)$, then $\fgHR_{\mu, \Theta}$ is the density of a H\"usler--Reiss distribution.
		\item[(ii)] If $\Theta \in \cS_{\bone, +}^{d \times d}$, $\mu \neq \muHR(\Theta)$ and $\mu^\top \bone > d$, then $\fgHR_{\mu, \Theta}$ is integrable on $\cL$ but is not the density of a multivariate Pareto distribution.
		\item[(iii)] If either $\Theta \notin \cS_{\bone, +}^{d \times d}$ or $\mu^\top \bone \leq d$, then $\fgHR_{\mu, \Theta}$ is not integrable on $\cL$.
	\end{itemize}
\end{lemm}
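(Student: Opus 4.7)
My plan is to analyze all three cases through a common change of variables. Setting $\mathbf{z} := \log \by$, the Jacobian is $\exp(\bone^\top \mathbf{z})$ and (up to the measure-zero boundary of $\cL$ where some $y_i = 0$) the integral of $\fgHR_{\mu,\Theta}$ on $\cL$ (prior to normalization) becomes $\int_{\max_i z_i > 0} \exp\{-(\mu - \bone)^\top \mathbf{z} - \mathbf{z}^\top \Theta \mathbf{z}\}\, d\mathbf{z}$. Since $\Theta \bone = 0$, I further decompose $\mathbf{z} = c\bone + \mathbf{w}$ with $\mathbf{w} \in \bone^\perp$: the quadratic form becomes $\mathbf{w}^\top \Theta \mathbf{w}$, independent of $c$, and the integral factors into a one-dimensional integral in $c$ and a $(d-1)$-dimensional integral over $\bone^\perp$.

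The $c$-integral equals $\int_{-\max_i w_i}^\infty e^{-\beta c}\, dc$ with $\beta := \mu^\top \bone - d$: it diverges for every $\mathbf{w}$ when $\beta \leq 0$, disposing of the $\mu^\top \bone \leq d$ subcase of (iii). Otherwise it equals $\beta^{-1} e^{\beta \max_i w_i}$ and integrability reduces to that of the integrand $\exp\{-(\mu - \bone)^\top \mathbf{w} - \mathbf{w}^\top \Theta \mathbf{w} + \beta \max_i w_i\}$ over $\bone^\perp$. If $\Theta \in \cS_{\bone,+}^{d \times d}$, then $\Theta|_{\bone^\perp}$ is positive definite, the quadratic term dominates, and the integral is finite, which gives the integrability needed in (i) and (ii). If instead $\Theta \in \cS_{\bone}^{d \times d} \setminus \cS_{\bone,+}^{d \times d}$, then either $\Theta$ has a direction of negative curvature in $\bone^\perp$ (so the exponent grows quadratically to $+\infty$ along that line) or $\ker \Theta \supsetneq \mathrm{span}\{\bone\}$; in the latter subcase I use Tonelli to integrate first along the $\Theta$-null subspace of $\bone^\perp$, and along any nonzero $\mathbf{v}$ therein the exponent is asymptotically linear in the line parameter $t$ with slopes at $t \to \pm \infty$ summing to $\beta(\max_i v_i - \min_i v_i) > 0$ (since $\bone^\top \mathbf{v} = 0$ with $\mathbf{v} \neq 0$ forces $\max_i v_i > 0 > \min_i v_i$), so at least one slope is non-negative and the one-dimensional integral, hence the full integral, diverges. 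This completes (iii).

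In the remaining regime $\Theta \in \cS_{\bone,+}^{d \times d}$, $\mu^\top \bone > d$, the function $\fgHR_{\mu,\Theta}$ is integrable and its normalization gives a density on $\cL$, so (MP1) is immediate. The scaling identity $\fgHR_{\mu,\Theta}(t\by) = t^{-\mu^\top \bone}\fgHR_{\mu,\Theta}(\by)$ shows (MP2) forces $\mu^\top \bone$ to equal a single specific value. For (MP3), the same change of variables applied to $P(Y_k > 1) = \int_{z_k > 0} \cdots d\mathbf{z}$ produces a Gaussian integral on $\bone^\perp$ equal to a $k$-independent positive constant times $\exp\{b_k^\top \Theta^+ b_k / 4\}$ with $b_k := \mu - \bone - \beta e_k \in \bone^\perp$; requiring equality of these probabilities across $k$ is a linear system in $\mu$ which, combined with the (MP2) constraint, has a unique solution, and by \cref{ex:HR} that solution is $\muHR(\Theta)$. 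Case (i) is then essentially a restatement of the definition of H\"usler--Reiss, while case (ii) follows contrapositively: any $\mu \neq \muHR(\Theta)$ satisfying the integrability hypothesis must violate (MP2) or (MP3), so the normalized density is not multivariate Pareto.

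The main obstacle is the degenerate subcase of (iii) where $\Theta$ is positive semidefinite but $\ker \Theta \supsetneq \mathrm{span}\{\bone\}$: Gaussian decay is unavailable on the $\Theta$-null subspace inside $\bone^\perp$, and divergence has to be extracted from the interplay between the linear term $-(\mu - \bone)^\top \mathbf{w}$, the piecewise-linear term $\beta \max_i w_i$, and the sum-zero constraint $\bone^\top \mathbf{w} = 0$, rather than from coercivity.
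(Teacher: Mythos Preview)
Your argument is correct and arrives at the same conclusions as the paper, but via a genuinely different decomposition. After the common change of variables $\mathbf{z}=\log\by$, the paper splits off one coordinate, writing $\bx=(x_k,\bx_{\setminus k})$: the inner integral over $\bx_{\setminus k}\in\R^{d-1}$ is a Gaussian integral that is finite exactly when the principal submatrix $\Theta^{(k)}:=\Theta_{\setminus k,\setminus k}$ is positive definite, and this single criterion simultaneously forces $\Theta$ to be positive semidefinite of rank $d-1$. After completing the square the outer $x_k$-integral then reveals the condition $\mu^\top\bone>d$, and the (MP3) analysis is carried out through the variogram $\Gamma$ and the augmented inverses $\tilde\Sigma^{(k)}$. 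Your orthogonal split $\mathbf{z}=c\bone+\mathbf{w}$ with $\mathbf{w}\in\bone^\perp$ is more intrinsic to the constraint $\Theta\bone=0$: the $\mu^\top\bone>d$ condition drops out immediately from the $c$-integral, and (MP3) is handled cleanly with the pseudoinverse $\Theta^+$ rather than $\Gamma$. The price you pay is that the non-integrability when $\Theta\notin\cS_{\bone,+}^{d\times d}$ no longer comes from a single Gaussian-integral observation; you must separate the case of a negative direction in $\bone^\perp$ from the rank-deficient case, and the latter requires the piecewise-linear slope argument you single out as the main obstacle. That argument is sound (the two asymptotic outward slopes differ by $\beta(\max_i v_i-\min_i v_i)>0$, so at least one is nonnegative and a single Tonelli step forces divergence), though your phrase ``the integral factors'' is slightly loose since the $c$-domain depends on $\mathbf{w}$; iterated integration is what you actually use. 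In short: the paper's coordinate approach gives a more uniform treatment of the $\Theta$-condition, while your orthogonal approach isolates the roles of $\mu^\top\bone$ and $\Theta|_{\bone^\perp}$ more transparently and avoids the variogram entirely in the (MP3) step.
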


\begin{rem}
	Part $(i)$ in \cref{lemm:FgHR} was already stated and proved by \cite{LO23}, and is in fact the main justification for working with $\FgHR$. The contribution here is in giving a complete picture of the functions in this class. In particular, there are no multivariate Pareto densities in $\FgHR$ other than H\"usler--Reiss densities.
\end{rem}

\section{Proofs}
\label{sec:proofs}

Throughout the proofs, for a vector $\by \in \R^d$, we generically define $y_i$ as the $i$th entry of $\by$ and $\by_{\setminus i} \in \R^{d-1}$ as the subvector obtained by removing its $i$th entry. The same conventions are used for indexing the rows and columns of a matrix.

\subsection{\texorpdfstring{Proof of \cref{lemm:F}}{Proof of the first lemma}}

By assumption, the density class $\cF$ is defined as in \cref{eq:pimodel}. Using the assumed properties (MP2) and (MP3) of multivariate Pareto distributions as well as the property (PI2) of pairwise interaction models, we shall establish all the required properties that will ensure that each density in $\cF$ is an element of $\FgHR$. Specifically, it will be shown that the dimension $q$ of the marginal parameters $\mu_i$ can be taken as 1, that the functions $S_i$ and $T_i$ must be logarithmic, and that moreover the parameter matrix $\Theta$ must (or rather, can be assumed to) have zero row and column sums.

\subsubsection{The functions \texorpdfstring{$T_i$}{Ti} must be logarithmic and \texorpdfstring{$\Theta$}{Theta} can be assumed to have zero row sums}

The required homogeneity property (MP2) implies that for any $\by \in \cL$ and $t>1$,
\begin{equation} \label{eq:difference}
		\sum_{i=1}^d \mu_i^\top (S_i(ty_i) -  S_i(y_i)) + \sum_{i=1}^d \sum_{j=1}^d \Theta_{ij} (T_i(ty_i) T_j(ty_j) - T_i(y_i) T_j(y_j)) = (d+1) \log t.
\end{equation}

Since for every pair $(i, j)$, $i \neq j$, $\Theta_{ij}$ can be non-zero, the above implies that for every such pair, any $\by \in \cL$ and any $t>1$,
\begin{equation} \label{eq:TiTj}
	T_i(ty_i) T_j(ty_j) - T_i(y_i) T_j(y_j) = a_{ij}(\by_{\setminus j}, t) + b_{ij}(\by_{\setminus i}, t),
\end{equation}
for some functions $a_{ij}$ and $b_{ij}$ not depending on $y_j$ and on $y_i$, respectively. Consider now the following auxiliary result.

\begin{lemm}
\label{lemm:xipsi}
	Let $\xi, \psi: (0, \infty) \to \R$ be non-constant functions such that for any $x, y \in (0, \infty)$ and $t>1$,
	\[
		\xi(tx) \psi(ty) - \xi(x) \psi(y) = \alpha(x, t) + \beta(y, t),
	\]
	for some functions $\alpha$ and $\beta$. Then there exist functions $\delta_\xi$ and $\delta_\psi$ such that for all positive $x_1$, $x_2$, $y_1$ and $y_2$ and all $t>0$,
	\[
		\xi(tx_1) - \xi(tx_2) = \delta_\xi(t) (\xi(x_1) - \xi(x_2)), \quad \psi(ty_1) - \psi(ty_2) = \delta_\psi(t) (\psi(y_1) - \psi(y_2))
	\]
	and such that $\delta_\xi(t)$ and $\delta_\psi(t)$ are non-zero, for any $t>0$.
\end{lemm}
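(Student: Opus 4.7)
The strategy is to perform two successive ``subtraction tricks'' on the hypothesized identity: first over $y$ to eliminate $\alpha(x,t)$, then over $x$ to eliminate $\beta(y,t)$. Concretely, fixing $y_1, y_2 > 0$ and subtracting the identity at $(x, y_1)$ from that at $(x, y_2)$ yields
\[
    \xi(tx)\bigl[\psi(ty_1) - \psi(ty_2)\bigr] - \xi(x)\bigl[\psi(y_1) - \psi(y_2)\bigr] = \beta(y_1, t) - \beta(y_2, t),
\]
and a further subtraction at $x = x_1$ versus $x = x_2$ removes $\beta$ altogether, leaving the clean product identity
\[
    \bigl[\xi(tx_1) - \xi(tx_2)\bigr]\bigl[\psi(ty_1) - \psi(ty_2)\bigr] = \bigl[\xi(x_1) - \xi(x_2)\bigr]\bigl[\psi(y_1) - \psi(y_2)\bigr]
\]
valid for all $t > 1$ and all positive $x_1, x_2, y_1, y_2$.

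Next, using that $\xi$ and $\psi$ are non-constant, I would fix reference points $x_1^*, x_2^*$ and $y_1^*, y_2^*$ with $\xi(x_1^*) \neq \xi(x_2^*)$ and $\psi(y_1^*) \neq \psi(y_2^*)$. Specializing the product identity at $(y_1, y_2) = (y_1^*, y_2^*)$ isolates $\xi(tx_1) - \xi(tx_2)$ as a $t$-dependent multiple of $\xi(x_1) - \xi(x_2)$, and symmetrically for $\psi$; this defines the candidate functions $\delta_\xi(t)$ and $\delta_\psi(t)$ for $t > 1$. The one delicate point is ensuring the relevant denominators are non-zero: if $\psi(ty_1^*) = \psi(ty_2^*)$ for some $t > 1$, the specialized product identity would force $\xi(tx_1) = \xi(tx_2)$ for all $x_1, x_2$, making $\xi$ constant on $(0, \infty)$ and contradicting the hypothesis. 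The same argument shows $\delta_\xi(t), \delta_\psi(t) \neq 0$.

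Finally, I would extend from $t > 1$ to all $t > 0$. At $t = 1$, the hypothesis is vacuous but one simply sets $\delta_\xi(1) = \delta_\psi(1) = 1$, which is forced by the claimed identity. For $t \in (0, 1)$, applying the identity already established at $s := 1/t > 1$ to the points $(tx_1, tx_2)$ gives $\xi(x_1) - \xi(x_2) = \delta_\xi(1/t)\bigl[\xi(tx_1) - \xi(tx_2)\bigr]$, and since $\delta_\xi(1/t) \neq 0$ one may define $\delta_\xi(t) := \delta_\xi(1/t)^{-1}$; likewise for $\psi$. The main (and only) subtlety of the argument is the non-vanishing step, which uses crucially that \emph{both} functions are non-constant; everything else is bookkeeping.
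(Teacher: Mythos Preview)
Your proof is correct and follows essentially the same strategy as the paper: eliminate the additive terms $\alpha$ and $\beta$ via finite differences, then use non-constancy of $\xi$ and $\psi$ to force the proportionality and the non-vanishing of the scaling factors, and finally extend to $t\in(0,1]$ by setting $\delta_\xi(t)=\delta_\xi(t^{-1})^{-1}$. The only difference is packaging: you take two subtractions upfront to obtain the clean product identity $[\xi(tx_1)-\xi(tx_2)][\psi(ty_1)-\psi(ty_2)]=[\xi(x_1)-\xi(x_2)][\psi(y_1)-\psi(y_2)]$, whereas the paper subtracts once in $x$ and then compares ratios across two pairs of $x$-values, but the arguments are equivalent.
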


\begin{proof}
	
	Let $x_1, x_2, y \in (0, \infty)$ and $t>1$. Applying our assumption to the points $(x_1, y)$ and $(x_2, y)$, find that
	\begin{equation} \label{eq:finitediff1}
		\psi(ty) (\xi(tx_1) - \xi(tx_2)) - \psi(y) (\xi(x_1) - \xi(x_2)) = \alpha(x_1, t) - \alpha(x_2, t).
	\end{equation}
	At this point, note that $\xi(x_1) = \xi(x_2)$ if and only if $\xi(tx_1) = \xi(tx_2)$. Indeed, if one of these equalities holds but not the other, \cref{eq:finitediff1} contradicts the assumption that $\psi$ is not constant. Now supposing that $\xi(x_1) \neq \xi(x_2)$, which is possible since $\xi$ is not constant, \cref{eq:finitediff1} is equivalent to
	\[
		\psi(ty) - \psi(y) \frac{\xi(x_1) - \xi(x_2)}{\xi(tx_1) - \xi(tx_2)} = \frac{\alpha(x_1, t) - \alpha(x_2, t)}{\xi(tx_1) - \xi(tx_2)}.
	\]
	Applying the same reasoning with any other pair of points $x_3, x_4$ such that $\xi(x_3) \neq \xi(x_4)$ yields
	\[
		\psi(ty) - \psi(y) \frac{\xi(x_3) - \xi(x_4)}{\xi(tx_3) - \xi(tx_4)} = \frac{\alpha(x_3, t) - \alpha(x_4, t)}{\xi(tx_3) - \xi(tx_4)}.
	\]
	Subtracting the latter equation from the former,
	\[
		\psi(y) \bigg( \frac{\xi(x_1) - \xi(x_2)}{\xi(tx_1) - \xi(tx_2)} - \frac{\xi(x_3) - \xi(x_4)}{\xi(tx_3) - \xi(tx_4)} \bigg) = \frac{\alpha(x_1, t) - \alpha(x_2, t)}{\xi(tx_1) - \xi(tx_2)} - \frac{\alpha(x_3, t) - \alpha(x_4, t)}{\xi(tx_3) - \xi(tx_4)},
	\]
	which is constant in $y$. However, $\psi$ was assumed non-constant. Deduce that the difference between parentheses has to be zero, hence for any (fixed) $t>1$, among all pairs $x_1, x_2$ such that $\xi(x_1) \neq \xi(x_2)$, the ratio $(\xi(tx_1) - \xi(tx_2))/(\xi(x_1) - \xi(x_2))$ is constant, say equal to $\delta_\xi(t)$.
	
	To summarize, we have shown that
	\[
		\xi(tx_1) - \xi(tx_2) = \delta_\xi(t) (\xi(x_1) - \xi(x_2))
	\]
	holds for every $t>1$ and $x_1$, $x_2$ such that $\xi(x_1) \neq \xi(x_2)$. By extension, it holds for every positive $x_1$ and $x_2$, since $\xi(x_1) = \xi(x_2)$ makes both sides vanish. Finally, the same clearly holds for $t \in(0, 1]$ if $\delta_\xi(t)$ is defined as $1$ for $t=1$, and as $\delta_\xi(t^{-1})^{-1}$ for $t<1$.
	
	This is the desired result for the function $\xi$. By symmetry, the same holds for $\psi$.
\end{proof}

By \cref{lemm:xipsi}, there exist functions $\delta_i$, $i \in \{1, \dots, d\}$, such that for every positive $y_i^{(1)}$, $y_i^{(2)}$ and $t$,
\begin{equation} \label{eq:cnsqxipsi}
	T_i(ty_i^{(1)}) - T_i(ty_i^{(2)}) = \delta_i(t) \big( T_i(y_i^{(1)}) - T_i(y_i^{(2)}) \big).
\end{equation}

Now, let $y_i^{(1)}$, $y_i^{(2)}$, $y_j^{(1)}$ and $y_j^{(2)}$ be such that $T_i(y_i^{(1)}) \neq T_i(y_i^{(2)})$ and $T_j(y_j^{(1)}) \neq T_j(y_j^{(2)})$. Define four vectors $\by^{(1)}, \dots, \by^{(4)}$, the $i$th and $j$th entries of which are $(y_i^{(1)}, y_j^{(1)})$, $(y_i^{(1)}, y_j^{(2)})$, $(y_i^{(2)}, y_j^{(1)})$ and $(y_i^{(2)}, y_j^{(2)})$, respectively, and which agree with each other in the other $d-2$ entries.

Applying \cref{eq:TiTj} to those four vectors, followed by \cref{eq:cnsqxipsi}, we have
\begin{align*}
	0 &= a_{ij}(\by_{\setminus j}^{(1)}, t) + b_{ij}(\by_{\setminus i}^{(1)}, t) - a_{ij}(\by_{\setminus j}^{(2)}, t) - b_{ij}(\by_{\setminus i}^{(2)}, t) - a_{ij}(\by_{\setminus j}^{(3)}, t) - b_{ij}(\by_{\setminus i}^{(3)}, t) + a_{ij}(\by_{\setminus j}^{(4)}, t) + b_{ij}(\by_{\setminus i}^{(4)}, t)
	\\
	&= (T_i(ty_i^{(1)}) - T_i(ty_i^{(2)})) \times (T_j(ty_j^{(1)}) - T_j(ty_j^{(2)})) - (T_i(y_i^{(1)}) - T_i(y_i^{(2)})) \times (T_j(y_j^{(1)}) - T_j(y_j^{(2)}))
	\\
	&= (\delta_i(t) \delta_j(t) - 1) \times (T_i(y_i^{(1)}) - T_i(y_i^{(2)})) \times (T_j(y_j^{(1)}) - T_j(y_j^{(2)}))
\end{align*}
for any $t>1$. By assumption, the last two terms in the product are non-zero. Deduce that $\delta_i(t) \delta_j(t) = 1$. However, for a third index $k \notin \{i, j\}$, we may apply the same logic to find that similarly, $\delta_i(t) \delta_k(t) = \delta_j(t) \delta_k(t) = 1$. This is only possible if $\delta_i(t) = \delta_j(t) = \delta_k(t) = 1$ for every $t>1$, and by extension for every $t>0$, recalling that $\delta_i(t)$ is defined as $1$ for $t=1$ and as $\delta_i(t^{-1})^{-1}$ for $t<1$. The same argument applies to every triple $(i, j, k)$, so that \cref{eq:cnsqxipsi} can be rewritten as
\[
	T_i(ty_i^{(1)}) - T_i(ty_i^{(2)}) = T_i(y_i^{(1)}) - T_i(y_i^{(2)}),
\]
which now holds for every index $i \in \{1, \dots, d\}$ and positive $y_i^{(1)}$, $y_i^{(2)}$ and $t$. Equivalently, for every $t>0$ the function $y \mapsto T_i(ty) - T_i(y)$ is constant. We now apply the following.

\begin{lemm}
\label{lemm:xi}
	Let $\xi: (0, \infty) \to \R$ be a measurable function such that for any $t>0$, the function
	\[
		x \mapsto \xi(tx) - \xi(x)
	\]
	is constant over $x>0$. Then $\xi(x) = c \log x + \xi(1)$, for some $c \in \R$.
\end{lemm}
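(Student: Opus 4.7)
The plan is to reduce the hypothesis to Cauchy's multiplicative-to-additive functional equation and then invoke the classical measurable-solution result.

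First, I would exploit the fact that $x \mapsto \xi(tx) - \xi(x)$ is constant in $x$ by evaluating it at $x = 1$. Writing $h(t) := \xi(t) - \xi(1)$, the hypothesis yields
\[
	\xi(tx) - \xi(x) = \xi(t\cdot 1) - \xi(1) = h(t), \quad x,t > 0.
\]
Substituting $x = s$ and using this identity twice (once for $\xi(ts)$ and once for $\xi(s)$ in terms of $\xi(1)$), I obtain
\[
	h(ts) = \xi(ts) - \xi(1) = \xi(s) + h(t) - \xi(1) = h(s) + h(t), \quad s, t > 0.
\]
So $h$ satisfies the multiplicative Cauchy equation on $(0, \infty)$ and is measurable, being the translate of a measurable function.

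Next, I would change variables to linearize this into the standard additive Cauchy equation: set $k(u) := h(e^u)$ for $u \in \R$. Then $k$ is measurable on $\R$, and
\[
	k(u+v) = h(e^{u+v}) = h(e^u e^v) = h(e^u) + h(e^v) = k(u) + k(v).
\]

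The final step is the only potentially subtle one: I would invoke the classical theorem (due to Fréchet, or essentially Banach) that every Lebesgue-measurable additive function $k: \R \to \R$ is linear, i.e., $k(u) = cu$ for some $c \in \R$. Translating back, $h(t) = k(\log t) = c\log t$, and therefore $\xi(x) = \xi(1) + h(x) = c\log x + \xi(1)$, as required. No step presents a real obstacle; the measurability assumption is used precisely to rule out the pathological non-linear solutions of Cauchy's equation that exist under the axiom of choice.
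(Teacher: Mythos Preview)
Your proof is correct and essentially identical to the paper's: both evaluate the constant at $x=1$ to obtain $\xi(tx)-\xi(1)=\xi(t)+\xi(x)-2\xi(1)$, pass to the additive Cauchy equation via the substitution $u=\log x$, and invoke the classical result that measurable solutions are linear. The only cosmetic difference is that the paper works with $\varphi=\xi\circ\exp$ and $\varphi-\varphi(0)$, which is exactly your function $k$.
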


\begin{proof}
	
	For every positive $x$ and $t$, $\xi$ satisfies $\xi(tx) - \xi(x) = \xi(t) - \xi(1)$, that is
	\[
		\xi(tx) - \xi(1) = \xi(x) + \xi(t) - 2\xi(1).
	\]
	Equivalently, in terms of the function $\varphi:= \xi \circ \exp$,
	\[
		\varphi(u+v) - \varphi(0) = \varphi(u) + \varphi(v) - 2\varphi(0), \quad u, v \in \R.
	\]
	That is, $\varphi - \varphi(0)$ satisfies Cauchy's functional equation, the only measurable solutions to which are additive functions of the form $u \mapsto cu$, for some $c \in \R$ \citep[see Theorem 1.1.8 of][and the references therein]{BGT1987,K47}. Thus, $\xi(x) = \varphi(\log x) = c\log x + \xi(1)$.
\end{proof}

Applying \cref{lemm:xi}, deduce that $T_i$ is a logarithmic function of the form $T_i(y) = c_i \log y + T_i(1)$. Note however that the values of $c_i$ and $T_i(1)$ can be absorbed into the parameters $\mu_i$ and $\Theta_{ij}$, $j \in \{1, \dots, d\}$, and into the normalizing constant, and are thus not important in characterizing the possible distributions in $\cP$. We may therefore assume that all the functions $T_1, \dots, T_d$ are equal to the logarithm function.

With our current formulation of the distributions in $\cP$, the diagonal elements of $\Theta$ are not identifiable. Indeed, their value can be changed arbitrarily by adding a $(q+1)$th dimension to each $\mu_i$ and letting $S_i(y_i)_{q+1} = (\log y_i)^2$. Therefore, we shall assume without loss of generality that $\Theta_{ii} = -\sum_{j \neq i} \Theta_{ij}$, so that the row sums of $\Theta$ (and by symmetry, its column sums) are all equal to zero.

\subsubsection{The parameters \texorpdfstring{$\mu_i$}{mui} can be assumed scalar and the functions \texorpdfstring{$S_i$}{Si} must be logarithmic}

Replacing the functions $T_i$ by logarithms in \cref{eq:difference} and using the assumption that the row and columns sums of $\Theta$ are zero, we find
\begin{align*}
	(d+1) \log t &= \sum_{i=1}^d \mu_i^\top (S_i(ty_i) -  S_i(y_i)) + \sum_{i=1}^d \sum_{j=1}^d \Theta_{ij} (\log y_i + \log y_j + \log t) \log t = \sum_{i=1}^d \mu_i^\top (S_i(ty_i) -  S_i(y_i)).
\end{align*}

Similarly to what was argued about the functions $T_i$, deduce that for all $y_i$ and $t>0$,
\[
	\mu_i^\top (S_i(ty_i) - S_i(y_i)) = \mu_i^\top (S_i(t) - S_i(1)),
\]
which by \cref{lemm:xi} means that $\mu_i^\top S_i$ can be chosen to be simply a logarithm, up to scaling. Thus, we may assume without loss of generality that $q=1$ and that the real-valued functions $S_i$ are logarithms.

To summarize, we have established that all the densities in $\cF$ must be of the form
\[
	f_{\mu, \Theta}(\by) = \frac{1}{Z(\mu, \Theta)} \exp\Big\{ -\sum_{i=1}^d \mu_i (\log y_i) - \sum_{i=1}^d \sum_{j=1}^d \Theta_{ij} (\log y_i)(\log y_j) \Big\}
\]
with $\Theta$ symmetric with zero row (and column) sums. This concludes the proof.
\hfill$\square$

\subsection{\texorpdfstring{Proof of \cref{lemm:FgHR}}{Proof of the second lemma}}

As mentioned after the statement of the result, $(i)$ is already obtained by \cite{LO23}, so only $(ii)$ and $(iii)$ shall be proved here.

Let $\fgHR_{\mu, \Theta} \in \FgHR$. It will first be shown that for $\fgHR_{\mu, \Theta}$ to be integrable on $\cL$, it is necessary for $\Theta$ to be a H\"usler--Reiss precision matrix, i.e., an element of $\cS_{\bone, +}^{d \times d}$, and for $\mu$ to satisfy $\mu^\top \bone > d$, establishing $(iii)$. Finally, it will be shown that for such a given matrix $\Theta$, for $\fgHR_{\mu, \Theta}$ to be a multivariate Pareto density, it is necessary for $\mu$ to have the specific form $\muHR(\Theta)$ in which case $\fgHR_{\mu, \Theta}$ is a H\"usler--Reiss density, thus establishing $(ii)$.

\subsubsection{If \texorpdfstring{$\Theta \notin \cS_{\bone, +}^{d \times d}$}{Theta is not a H\"usler--Reiss precision matrix}, then \texorpdfstring{$\fgHR_{\mu, \Theta}$}{fgHR} is not integrable}

For any index $k$, by the change of variable $\bx = \log\by$, we find that
\begin{align}
	Z(\mu, \Theta) \int_{\{\by \in \cL: y_k > 1\}} \fgHR_{\mu, \Theta}(\by) d\by &= \int_{\{\by \in \cL: y_k > 1\}} \exp\Big\{ -\sum_{i=1}^d \mu_i \log(y_i) - \sum_{i=1}^d \sum_{j=1}^d \Theta_{ij} (\log y_i)(\log y_j) \Big\} d\by \notag
	\\
	&= \int_{\{\bx \in \R^d: x_k > 0\}} \exp\Big\{ -(\mu - \bone)^\top \bx - \bx^\top \Theta \bx \Big\} d\bx. \label{eq:PYk>1}
\end{align}
Decomposing $\bx$ into $\bx_{\setminus k}$ and $x_k$, we may write $(\mu - \bone)^\top \bx$ as $(\mu - \bone)_{\setminus k}^\top \bx_{\setminus k} + (\mu_k - 1) x_k$, and $\bx^\top \Theta \bx$ as
\[
	\bx_{\setminus k}^\top \Theta^{(k)} \bx_{\setminus k} + 2x_k \Theta_{k, \setminus k} \bx_{\setminus k} + \Theta_{kk} x_k^2,
\]
where $\Theta^{(k)} := \Theta_{\setminus k, \setminus k}$. We may then rewrite \cref{eq:PYk>1} as
\begin{align*}
	&\int_0^\infty \int_{\R^{d-1}} \exp\Big\{ -\bx_{\setminus k}^\top \Theta^{(k)} \bx_{\setminus k} - 2x_k \Theta_{k, \setminus k} \bx_{\setminus k} - (\mu - \bone)_{\setminus k}^\top \bx_{\setminus k} \Big\} d\bx_{\setminus k} \times \exp\Big\{ -(\mu_k - 1) x_k - \Theta_{kk} x_k^2 \Big\} dx_k.
\end{align*}
The inner integral is a Gaussian type integral. It is straightforward to show that it is finite if and only if $\Theta^{(k)}$ is positive definite, using a spectral decomposition of that matrix. Deduce that for $\fgHR_{\mu, \Theta}$ to be integrable, all the matrices $\Theta^{(k)}$ must be positive definite (hence, of full rank $d-1$). This implies that $\Theta$ must also be of rank $d-1$. Moreover, $\Theta$ has to have only non-negative eigenvalues. Indeed, suppose it doesn't. Then there is an $\bx \in \R^d$ such that $\bx^\top \Theta \bx < 0$. However, since $\Theta\bone = 0$, it is also true that $0 > (\bx - x_k \bone)^\top \Theta (\bx - x_k \bone) = (\bx - x_k \bone)_{\setminus k}^\top \Theta^{(k)} (\bx - x_k \bone)_{\setminus k}$, contradicting the positive definiteness of $\Theta^{(k)}$.

It is therefore necessary for the integrability of $\fgHR_{\mu, \Theta}$ on $\cL$ that $\Theta \in \cS_{\bone, +}^{d \times d}$. For the remainder of the proof, we shall assume that this is the case.

\subsubsection{If \texorpdfstring{$\mu^\top \bone \leq d$}{the elements of mu sum to at most d}, then \texorpdfstring{$\fgHR_{\mu, \Theta}$}{fgHR} is not integrable}

Now that we assume the matrices $\Theta^{(k)}$ to be invertible, we may obtain a more precise expression for \cref{eq:PYk>1}. By tedious but elementary computations involving completion of the quadratic form $\bx_{\setminus k}^\top \Theta^{(k)} \bx_{\setminus k}$, we may rewrite the argument of the exponential in \cref{eq:PYk>1} as
\[
	- \big( \bx_{\setminus k} - \beta^{(k)}(x_k) \big)^\top \Theta^{(k)} \big( \bx_{\setminus k} - \beta^{(k)}(x_k) \big) + \beta^{(k)}(x_k)^\top \Theta^{(k)} \beta^{(k)}(x_k) + (1 - \mu_k) x_k - \Theta_{kk} x_k^2,
\]
where $\beta^{(k)}(x_k) := \tfrac{1}{2} \Sigma^{(k)} (\bone - \mu)_{\setminus k} + x_k \bone_{\setminus k}$ and $\Sigma^{(k)} := (\Theta^{(k)})^{-1}$. Since $\bx_{\setminus k}$ only appears in the first term, we may rewrite \cref{eq:PYk>1} as
\begin{align}
	&\int_0^\infty \bigg[ \int_{\R^{d-1}} \exp\Big\{ -\big( \bx_{\setminus k} - \beta^{(k)}(x_k) \big)^\top \Theta^{(k)} \big( \bx_{\setminus k} - \beta^{(k)}(x_k) \big) \Big\} d\bx_{\setminus k} \notag
	\\
	&\quad\quad \times \exp\Big\{ \beta^{(k)}(x_k)^\top \Theta^{(k)} \beta^{(k)}(x_k) + (1 - \mu_k) x_k - \Theta_{kk} x_k^2 \Big\} \bigg] dx_k \notag
	\\
	&\quad = \frac{(2\pi)^{(d-1)/2}}{\det(\Theta^{(k)})^{1/2}} \int_0^\infty \exp\Big\{ \beta^{(k)}(x_k)^\top \Theta^{(k)} \beta^{(k)}(x_k) + (1 - \mu_k) x_k - \Theta_{kk} x_k^2 \Big\} dx_k. \label{eq:PYk>1new}
\end{align}

Expanding the quadratic form $\beta^{(k)}(x_k)^\top \Theta^{(k)} \beta^{(k)}(x_k)$, we find that
\begin{align*}
	\beta^{(k)}(x_k)^\top \Theta^{(k)} \beta^{(k)}(x_k) + (1 - \mu_k) x_k - \Theta_{kk} x_k^2 &= -\big( (\mu - \bone)^\top \bone\big) x_k + \frac{1}{4} (\mu - \bone)_{\setminus k}^\top \Sigma^{(k)} (\mu - \bone)_{\setminus k}
	\\
	&= -(\mu^\top \bone - d)x_k + \frac{1}{4} (\mu - \bone)_{\setminus k}^\top \Sigma^{(k)} (\mu - \bone)_{\setminus k}.
\end{align*}
Therefore, \cref{eq:PYk>1new} is equal to
\begin{equation} \label{eq:PYk>1newnew}
	\frac{(2\pi)^{(d-1)/2}}{\det(\Theta^{(k)})^{1/2}} \exp\Big\{ \frac{1}{4} (\mu - \bone)_{\setminus k}^\top \Sigma^{(k)} (\mu - \bone)_{\setminus k} \Big\} \int_0^\infty e^{-(\mu^\top \bone - d)x_k} dx_k,
\end{equation}
which is finite if and only if $\mu^\top \bone > d$. This establishes that $\fgHR_{\mu, \Theta}$ is integrable on $\cL$ if and only if $\Theta \in \cS_{\bone, +}^{d \times d}$ and $\mu^\top \bone > d$, which in particular implies $(iii)$.

\subsubsection{If \texorpdfstring{$\fgHR_{\mu, \Theta}$}{fgHR} is a multivariate Pareto density, then \texorpdfstring{$\mu = \muHR(\Theta)$}{mu = muHR(Theta)}}

Now suppose that $\fgHR_{\mu, \Theta}$ is a multivariate Pareto density. By the last two sections, it follows that $\Theta \in \cS_{\bone, +}^{d \times d}$. Moreover, by the homogeneity property (MP2) and the fact that $\Theta\bone = 0$,
\[
	\log\fgHR_{\mu, \Theta}(\by) - \log\fgHR_{\mu, \Theta}(t\by) = \sum_{i=1}^d \mu_i \log t + \sum_{i=1}^d \sum_{j=1}^d \Theta_{ij} (\log y_i + \log y_j + \log t) \log t = \sum_{i=1}^d \mu_i \log t
\]
must be equal to $(d+1) \log t$. That is, $\mu^\top \bone = d+1$.

We shall now enforce the marginal standardization property (MP3). Recalling \cref{eq:PYk>1newnew}, we now have
\[
	Z(\mu, \Theta) \int_{\{\by \in \cL: y_k > 1\}} \fgHR_{\mu, \Theta}(\by) d\by = \frac{(2\pi)^{(d-1)/2}}{\det(\Theta^{(k)})^{1/2}} \exp\Big\{ \frac{1}{4} (\mu - \bone)_{\setminus k}^\top \Sigma^{(k)} (\mu - \bone)_{\setminus k} \Big\}.
\]

By Equation (23) in \cite{roe2021}, $\det(\Theta^{(k)})$ is in fact the pseudodeterminant of $\Theta$ and as such, does not depend on $k$. Hence, the marginal standardization property holds if and only if the value of $(\mu - \bone)_{\setminus k}^\top \Sigma^{(k)} (\mu - \bone)_{\setminus k}$ is the same for each $k$.

The matrices $\Sigma^{(k)}$, however, enjoy a special structure. Let us augment $\Sigma^{(k)}$ by adding a row and column of zeros in its $k$th position, forming a matrix $\tilde\Sigma^{(k)} \in \R^{d \times d}$. Then the matrices $\tilde\Sigma^{(k)}$ satisfy
\[
	\tilde\Sigma_{ij}^{(k)} = \frac{1}{2} (\Gamma_{ik} + \Gamma_{jk} - \Gamma_{ij}), \quad i, j \in \{1, \dots, d\},
\]
where $\Gamma$ is the variogram matrix associated to $\Theta$, as defined in \cref{ex:HR}. These are the same matrices $\tilde\Sigma^{(k)}$ as that appearing in Section 4.3 of \cite{EH2020}. Then, for any two indices $k$ and $\ell$,
\begin{align*}
	(\mu - \bone)_{\setminus k}^\top \Sigma^{(k)} (\mu - \bone)_{\setminus k} - (\mu - \bone)_{\setminus \ell}^\top \Sigma^{(\ell)} (\mu - \bone)_{\setminus \ell} &= (\mu - \bone)^\top \tilde\Sigma^{(k)} (\mu - \bone) - (\mu - \bone)^\top \tilde\Sigma^{(\ell)} (\mu - \bone)
	\\
	&= \frac{1}{2} \sum_{i=1}^d \sum_{j=1}^d \big( \tilde\Sigma_{ij}^{(k)} - \tilde\Sigma_{ij}^{(\ell)} \big) (\mu_i - 1)(\mu_j - 1)
	\\
	&= \frac{1}{2} \sum_{i=1}^d \sum_{j=1}^d (\Gamma_{ik} - \Gamma_{i\ell} + \Gamma_{jk} - \Gamma_{j\ell}) (\mu_i - 1)(\mu_j - 1)
	\\
	&= \sum_{i=1}^d (\Gamma_{ik} - \Gamma_{i\ell})(\mu_i - 1) \sum_{j=1}^d (\mu_j - 1)
	\\
	&= (\Gamma_{k\cdot} - \Gamma_{\ell\cdot}) (\mu - \bone),
\end{align*}
recalling the fact that $\mu^\top \bone = d+1$, or equivalently $(\mu - \bone)^\top \bone = 1$. This can only be zero for any $k \neq \ell$ if $\Gamma_{k\cdot}(\mu - \bone)$ has the same value for every $k$, i.e., $\Gamma(\mu - \bone) \in \{\gamma\bone: \gamma \in \R\}$. By invertibility of $\Gamma$, this forms a non-singular system of $d-1$ linear equations. The solution set is a one-dimensional linear subspace, only one element of which, say $\mu^* - \bone$, satisfies $(\mu^* - \bone)^\top \bone = 1$. Therefore, for any given parameter matrix $\Theta \in \cS_{\bone, +}^{d \times d}$, the parameter vector $\mu$ is uniquely determined by the multivariate Pareto structure. The resulting distribution can be none other than the H\"usler--Reiss distribution with precision matrix $\Theta$.

Note that it can be verified by simple calculations that the unique solution to the above linear system is indeed the H\"usler--Reiss distribution. Indeed, using Lemma S.5.11 of \cite{HES22}, it can be confirmed that $\Gamma(\muHR(\Theta) - \bone)$ is indeed the vector $\bone$ multiplied by the scalar $d^{-2} \bone^\top \Gamma (\tfrac{1}{2} \Theta\Gamma - I) \bone$, where $I$ denotes the identity matrix, and that $(\muHR(\Theta) - \bone)^\top \bone = 1$.
\hfill$\square$

%
%


\end{document}